\DeclareMathSymbol{\twoheadrightarrow}  {\mathrel}{AMSa}{"10}
\def\H{{\mathbb H}}
\def\Z{{\mathbb Z}}
\def\P{{\mathbb P}}
\def\f{{\tilde F}}
                     \def\f0{{\mathfrak f}}
             \def\K{\mathrm{K}}
\def\A8{{\mathbf A}_8}
\def\Bir{\mathrm{Bir}}
\def\div{\mathrm{div}}
\def\Hom{\mathrm{Hom}}
\def\M{\mathrm{M}}
\def\A{\mathbf{A}}
\def\dim{\mathrm{dim}}
       \def\G{{\mathfrak G}}
        \def\K{{\mathbf K}}
         \def\H{{\mathbf H}}
          \def\l1{{\mathbf 1}}
                                                   \def\OC{{\mathcal O}}
                                                   \def\M{{\mathcal M}}
                         \def\AA{\mathbb{A}}
                         \def\A{\mathbb{A}}
                                                    \def\div{\mathrm{div}}
\newtheorem{thm}{Theorem}[section]
\newtheorem{cor}[thm]{Corollary}
\theoremstyle{definition}
\newtheorem{defn}[thm]{Definition}
\newtheorem{rem}[thm]{Remark}
\title[Theta groups]{Theta groups and products of  abelian and rational varieties}
\author[Yuri G.\ Zarhin]{Yuri G.\ Zarhin}
\address{Department of Mathematics, Pennsylvania State University,
University Park, PA 16802, USA}
\address{Institute for Mathematical Problems in Biology, Russian Academy of
Sciences, Pushchino, Moscow Region, Russia}\email{zarhin\char`\@math.psu.edu}
\begin{document}

\begin{abstract}
We prove that an analogue of Jordan's theorem on finite subgroups of general linear groups does not hold for the groups of birational automorphisms of products of an elliptic curve and the projective line. This gives a negative answer to a question of V. L. Popov.
\end{abstract}

\subjclass{14E07, 14K05}

\maketitle

\section{Introduction}

Throughout this paper, $k$ is an algebraically closed field of characteristic zero, $\A^1$ and $\P^1$ the affine line and projective line respectively (both over $k$).
If $U$ is an irreducible algebraic variety over $k$ then we write $k[U], k(U)$ and $\Bir(U)$ for its ring ($k$-algebra) of regular functions, the field of rational functions and the group of birational $k$-automorphisms respectively.

The following definition was inspired by the classical theorem of Jordan about finite subgroups of general linear groups.

\begin{defn}[Definition 2.1 of \cite{Popov}]
A group $B$ is called a {\sl Jordan group} if there exists a positive integer $J_B$ such that every finite subgroup $B_1$ of $B$ contains a (normal) commutative subgroup, whose index in $B_1$ is at most $J_B$.
\end{defn}

 V. L. Popov \cite[Sect. 2]{Popov} posed a question whether $\Bir(Y)$ is a Jordan group when $Y$ is an irreducible surface. He obtained a positive answer to his question for almost all smooth projective minimal surfaces. One of the few remaining cases is a product $E\times\P^1$ of an elliptic curve $E$ and the projective line.

 Our main result is the following statement, which gives a negative answer to Popov's question.

 \begin{thm}
 \label{elliptic}
 If $E$ is an elliptic curve over $k$ then $\Bir(E\times\P^1)$  is not a Jordan group.
 \end{thm}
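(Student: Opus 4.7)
The plan is to exhibit, for every integer $n \geq 2$, a finite Heisenberg subgroup $H_n \subset \Bir(E \times \P^1)$ of order $n^{3}$ whose commutator pairing is the (nondegenerate) Weil pairing. A short group-theoretic argument will then show that every abelian subgroup of $H_n$ has index at least $n$ in $H_n$; letting $n\to\infty$ rules out any Jordan constant for $\Bir(E \times \P^1)$.

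For the construction, fix $n$, let $L_n$ be a line bundle on $E$ of degree $n$ (for instance $\mathcal{O}_E(nO)$ with $O\in E$ the origin), and consider Mumford's theta group
\[
1 \longrightarrow k^{*} \longrightarrow \mathcal{G}(L_n) \longrightarrow E[n] \longrightarrow 1,
\]
whose commutator pairing on $K(L_n) = E[n] \cong (\Z/n\Z)^2$ is (a power of) the Weil pairing, hence nondegenerate. Inside $\mathcal{G}(L_n)$ sits the canonical finite subgroup $H_n$, an extension $1 \to \mu_n \to H_n \to E[n] \to 1$ of order $n^{3}$. By construction $\mathcal{G}(L_n)$ acts on the total space of $L_n$ by line bundle automorphisms covering translations in $E[n]$; extending by the tautological action on the trivial summand $\mathcal{O}_E$ gives an action on the rank two bundle $L_n \oplus \mathcal{O}_E$. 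A central scalar $\lambda \in k^{*}\subset\mathcal{G}(L_n)$ then acts as $\mathrm{diag}(\lambda,1)$, which is not a scalar matrix for $\lambda \neq 1$, so the induced action on the ruled surface $Y_n := \P(L_n \oplus \mathcal{O}_E)$ is faithful. Since $Y_n$ is a $\P^1$-bundle over the curve $E$, it is birational to $E \times \P^1$, and restriction of the action to $H_n$ yields
\[
H_n \hookrightarrow \Aut(Y_n) \hookrightarrow \Bir(Y_n) = \Bir(E \times \P^1).
\]

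To finish, let $A \subseteq H_n$ be any abelian subgroup and $\bar A$ its image in $E[n]$. The commutator map on $H_n$ descends to the Weil pairing on $E[n]$ and must vanish on $\bar A$, so $\bar A$ is isotropic. Maximal isotropic subgroups of $(\Z/n\Z)^2$ under a nondegenerate alternating pairing have order $n$, so $|\bar A| \leq n$ and $|A| \leq n\cdot |\mu_n| = n^2$, giving $[H_n : A] \geq n$. In particular no normal abelian subgroup has index less than $n$, so no single constant $J$ can serve as a Jordan constant for $\Bir(E \times \P^1)$. The only delicate step is the faithful realization of $\mathcal{G}(L_n)$ on $\P(L_n \oplus \mathcal{O}_E)$: the trivial bundle summand is precisely what prevents the central $k^{*}$ of the theta group from being killed upon passage to $\mathrm{PGL}$, and this standard feature of Mumford's theta group formalism is the "theta group" input of the title.
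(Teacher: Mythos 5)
Your proof is correct, and its core is the same as the paper's: Mumford's theta group of a degree-$n$ line bundle on $E$, the finite Heisenberg subgroup of order $n^3$ lying over $\mu_n$, and the observation that commutative subgroups map to isotropic subgroups of $E[n]$, hence have index at least $n$. Where you differ is in how the theta group is realized inside $\Bir(E\times\P^1)$: the paper writes the action in coordinates, as explicit birational maps $(x,t)\mapsto (x+y, f(x)\cdot t)$ on $E\times\A^1$ with $\div(f)=D-T_y^{*}D$, forming the subgroup it calls $\Bir_1$; you instead projectivize, letting $\G(L_n)$ act biregularly on the ruled surface $\P(L_n\oplus\OC_E)$, with the trivial summand keeping the center $k^{*}$ alive as $\mathrm{diag}(\lambda,1)$ after passing to $\PGL_2$ on the fibers. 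These are two descriptions of the same action (your ruled surface is the fiberwise compactification of the paper's $E\times\A^1$ picture), but yours has the mild advantage of exhibiting the finite groups inside $\Aut$ of smooth projective surfaces birational to $E\times\P^1$, while the paper's explicit formulas generalize with no extra effort to abelian varieties of arbitrary dimension (its Theorem \ref{main}). The only point you state too casually is the existence of the order-$n^3$ subgroup $H_n$: it is not canonical in $\G(L_n)$ itself, but is obtained by transporting the visible subgroup $\mu_n\times \K\times\hat{\K}$ of Mumford's standard model $\G(\delta)$ through a choice of isomorphism $\G(L_n)\cong\G(\delta)$, exactly as the paper does with its $\G^{1}_{K(\delta)}$; this is a presentational gap, not a mathematical one.
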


 Since, $U\times \A^1$ is birationally isomorphic to $U\times \P^1$, the groups
$\Bir(U\times \A^1)$ and $\Bir(U\times \P^1)$ are isomorphic and
 Theorem \ref{elliptic} becomes equivalent to the assertion that $\Bir(E\times\A^1)$  is not a Jordan group, which, in turn, is a special case of the following statement.

\begin{thm}
\label{main} Let $X$ be an abelian variety of positive dimension  over $k$. Then  $\Bir(X\times\A^1)$  is not a Jordan group.
\end{thm}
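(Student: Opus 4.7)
The plan is to exhibit, for every positive integer $n$, a finite subgroup $H_n \subset \Bir(X \times \A^1)$, isomorphic to a level-$n$ Heisenberg group of order $n^{2g+1}$ (with $g = \dim X$), in which every abelian subgroup has index at least $n^g$. Because $g \geq 1$, these indices are unbounded as $n$ varies, so no fixed constant $J$ can serve as a Jordan constant for $\Bir(X \times \A^1)$.

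The group $H_n$ will arise from Mumford's theory of theta groups. Fix once and for all an ample line bundle $L$ on $X$, and consider the theta group $\GG(L^{\otimes n})$, a central extension
\[
1 \to \mathbb{G}_m \to \GG(L^{\otimes n}) \to K(L^{\otimes n}) \to 1,
\]
where $K(L^{\otimes n}) = \{x \in X : T_x^* L^{\otimes n} \cong L^{\otimes n}\}$ is finite and contains the $n$-torsion subgroup $X[n] \cong (\Z/n\Z)^{2g}$. Above $X[n]$ sits the finite level-$n$ Heisenberg subgroup
\[
1 \to \bmu_n \to H_n \to X[n] \to 1,
\]
of $\GG(L^{\otimes n})$, whose commutator pairing descends to the $n$-Weil pairing $e^L_n : X[n] \times X[n] \to \bmu_n$ attached to the polarization $\phi_L$. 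Ampleness of $L$ makes $e^L_n$ non-degenerate.

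Next, $\GG(L^{\otimes n})$ acts algebraically and faithfully on the total space $\mathrm{tot}(L^{\otimes n})$ by vector-bundle automorphisms covering translations on $X$. The total space of any line bundle on $X$ is Zariski-locally trivial, hence birational to $X \times \A^1$; this promotes the above action to an injective homomorphism $H_n \hookrightarrow \Bir(X \times \A^1)$. To conclude, I would observe that for any abelian subgroup $A \subset H_n$ the image $\bar A \subset X[n]$ must be isotropic for the non-degenerate symplectic form $e^L_n$, whence $|\bar A| \leq n^g$ and $|A| \leq n \cdot |\bar A| \leq n^{g+1}$; therefore
\[
[H_n : A] \geq \frac{n^{2g+1}}{n^{g+1}} = n^g \longrightarrow \infty.
\]

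The main technical hurdle I anticipate is the careful bookkeeping around the identification of the commutator on $H_n$ with the non-degenerate Weil pairing (a classical computation of Mumford); once that is in hand, the conversion of the regular action on $\mathrm{tot}(L^{\otimes n})$ into a faithful birational action on $X \times \A^1$ and the maximal-isotropic count for $(\Z/n\Z)^{2g}$ are routine.
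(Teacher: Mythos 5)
Your overall strategy---embed Mumford's theta groups $\G(L^{\otimes n})$ into $\Bir(X\times\A^1)$ via the total space of the bundle (which is indeed birational to $X\times\A^1$, and on which the theta group acts faithfully), then bound abelian subgroups by a maximal-isotropic count for the commutator pairing---is the same as the paper's, which realizes $\G(L^n)$ inside the explicit subgroup of maps $(x,t)\mapsto(x+y,\,f(x)t)$. However, one step fails as written: the claim that the commutator pairing of your level subgroup $H_n$, i.e.\ the restriction of $e^{L^{\otimes n}}$ to $X[n]$, is non-degenerate because $L$ is ample. That restriction is $(x,y)\mapsto e_n(x,\phi_L(y))$, whose radical is $X[n]\cap\ker\phi_L=X[n]\cap H(L)$; this is nonzero whenever $\gcd(n,\exp H(L))>1$, and $H(L)\neq 0$ is unavoidable when $X$ admits no principal polarization. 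Concretely, take $X=E$ an elliptic curve, $L=\OC_E(2\cdot O)$ and $n=2$: the pairing on $E[2]\subset H(L^2)=E[4]$ is $e_4|_{E[2]}=e_2^{\,2}=1$, so $E[2]$ is isotropic, your $H_2$ is abelian, and the claimed bound $[H_2:A]\ge n^{g}=2$ fails with $A=H_2$.

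The gap is repairable in two ways, so your construction does ultimately prove the theorem. (i) Restrict to $n$ prime to $\#H(L)$ (large primes suffice); then $X[n]\cap\ker\phi_L=0$, the pairing on $X[n]$ is non-degenerate, and the indices $n^{g}$ are still unbounded, which is all that is needed to contradict the Jordan property. (ii) Do as the paper does: work with the full group $H(L^n)$ rather than its subgroup $X[n]$. By Mumford's structure theorem the commutator pairing is non-degenerate on all of $H(L^n)$, so every commutative subgroup of the $\bmu_N$-truncated theta group has index at least $\sqrt{\#H(L^n)}$, and the inclusion $X[n]\subset H(L^n)$ is used only to get the lower bound $\#H(L^n)\ge n^{2\dim X}$; no non-degeneracy on $X[n]$ itself is ever invoked. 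Your passage from the bundle action to $\Bir(X\times\A^1)$ is correct and is just the geometric form of the paper's explicit cocycle computation for $A(y,f)$.
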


\begin{cor}
\label{rational}

 Let $X$ be an abelian variety of positive dimension  over $k$ and $Z$ is a rational variety of positive dimension  over $k$. Then $\Bir(X\times Z)$  is not a Jordan group.
\end{cor}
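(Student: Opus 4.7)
The plan is to derive Corollary \ref{rational} directly from Theorem \ref{main} by a birational reduction together with the trivial observation that a group containing a non-Jordan subgroup is itself non-Jordan.

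First, since $Z$ is a rational variety with $d:=\dim Z\ge 1$, it is birationally isomorphic to $\P^d$, and hence to $\A^d$. Therefore $X\times Z$ is birationally isomorphic to $X\times\A^d$, so
\[
\Bir(X\times Z)\;\cong\;\Bir(X\times\A^d)\;\cong\;\Bir\bigl((X\times\A^1)\times\A^{d-1}\bigr).
\]
(When $d=1$ the factor $\A^{d-1}$ is trivial and the statement reduces immediately to Theorem \ref{main}.)

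Next, I would exhibit an injective homomorphism
\[
\Bir(X\times\A^1)\;\hookrightarrow\;\Bir\bigl((X\times\A^1)\times\A^{d-1}\bigr),
\qquad g\longmapsto g\times\IT_{\A^{d-1}},
\]
that sends a birational self-map $g$ of $X\times\A^1$ to the product birational self-map that acts as $g$ on the first factor and as the identity on $\A^{d-1}$. This is clearly a well-defined injective group homomorphism, so $\Bir(X\times\A^1)$ is realized as a subgroup of $\Bir(X\times Z)$.

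Finally, I would invoke the elementary fact that being Jordan is inherited by subgroups: if $H\le G$ and $G$ is Jordan with bound $J_G$, then every finite subgroup of $H$ is a finite subgroup of $G$, so $H$ is Jordan with the same bound $J_G$. Contrapositively, a group containing a non-Jordan subgroup is not Jordan. By Theorem \ref{main}, $\Bir(X\times\A^1)$ is not a Jordan group, and therefore neither is $\Bir(X\times Z)$. There is no real obstacle here; the entire content of the corollary is already packed into Theorem \ref{main}, and this argument is just a short formal reduction.
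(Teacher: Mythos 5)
Your proposal is correct and follows essentially the same route as the paper: replace $Z$ by $\A^d$ up to birational isomorphism, embed $\Bir(X\times\A^1)$ into $\Bir(X\times\A^d)$ via $g\mapsto g\times\IT_{\A^{d-1}}$, and use that the Jordan property passes to subgroups together with Theorem \ref{main}. You merely spell out the embedding and the subgroup-heredity step more explicitly than the paper does, which is harmless.
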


\begin{proof}[Proof of Corollary \ref{rational} (modulo Theorem \ref{main})]
Since $Z$ is birationally isomorphic to the $d$-dimensional affine space $\A^d$ with $d=\dim(Z)\ge 1$, the groups $\Bir(X\times Z)$ and $\Bir(X\times\A^d)$ are isomorphic. So, it suffices to check that $\Bir(X\times\A^d)$ is {\sl not} a Jordan group. If $d=1$ the result follows from Theorem \ref{main}. If $d>1$ then
$X\times\A^d=(X\times\A^1) \times \A^{d-1}$ and one may view $\Bir(X\times\A^1)$ as the certain subgroup of $\Bir(X\times\A^d)$ and again Theorem \ref{main} gives us the desired result.
\end{proof}

The paper is organized as follows. Section \ref{Bir1} deals with the certain subgroup $\Bir_1(X\times\A^1)$ of $\Bir(E\times\A^1)$ that is generated by translations on $X$ and multiplications of the global coordinate $t$ on $\A^1$ by nonzero rational functions on $X$. We assert that  $\Bir_1(X\times\A^1)$  is not a Jordan group; obviously, this assertion implies that $\Bir(X\times\A^1)$  is also  not a Jordan group. In Section \ref{group} we discuss a {\sl symplectic geometry} related to certain analogues of Heisenberg groups that were introduced by Mumford \cite[Sect. 1]{Mumford66}. In Sect. \ref{theta}, using results of  Mumford \cite[Sect. 1]{Mumford66},  we realize these analogues as subgroups of  $\Bir_1(X\times\A^1)$, which allows us to prove that
$\Bir_1(A\times\A^1)$  is not a Jordan group.

I am grateful to Volodya Popov for a stimulating question, his interest in this paper and useful discussions.

\section{Birational automorphisms of products of an abelian variety and the affine line}
\label{Bir1}

Let $X$ be an abelian variety of positive dimension over $k$. If $y\in X(k)$ then we write $T_y$ for the translation map
$$T_y: X\to X, \ x \mapsto x+y.$$
As usual, we write $\div(f)$ for the divisor of a rational function $f \in k(X)^{*}$. Clearly,  $T_y^{*}f$ is the
rational function $x \mapsto f(x+y)$, whose divisor coincides with $T_y^{*}(\div(f))$. Let $t$ be the global coordinate on $\A^1$.

We write $\Bir_1(X\times\A^1)\subset \Bir(X\times\A^1)$
for the set of birational automorphisms of the form
$$A(y,f): X \times \A^1 \dashrightarrow  X \times \A^1, (x,t)\mapsto (x+y, f(x)\cdot t)=(T_y(x), f(x)\cdot t)$$
where $y$ runs through $X(k)$ and $f$ through $k(X)^{*}$. Actually, $\Bir_1(X\times\A^1)$ is a subgroup of  $\Bir(X\times\A^1)$.
Indeed, one may easily check that
$$A(y_2,f_2) A(y_1,f_1)=A(y_1+y_2, T_{y_1}^{*}(f_2) \cdot f_1) \in \Bir_1(X\times\A^1) $$
and the inverse of $A(y,f)$ in  $\Bir(X\times\A^1)$ coincides with $A(-y, T_{-y}^{*}(1/f)) \in \Bir_1(X\times\A^1)$.

Now Theorem \ref{main} becomes an immediate corollary of the following statement.

\begin{thm}
\label{main1} Let $X$ be an abelian variety of positive dimension  over $k$. Then  $\Bir_1(X\times\A^1)$  is not a Jordan group.
\end{thm}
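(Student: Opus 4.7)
My approach is to realize, for each integer $n\ge 1$, a finite Heisenberg-type subgroup $H_n\subset\Bir_1(X\times\A^1)$ coming from Mumford's theta group of a large tensor power of an ample line bundle on $X$, and then to use the non-degeneracy of the Weil $e_n$-pairing on $X[n]$ to show that the minimum index of an abelian subgroup of $H_n$ grows without bound in $n$. This directly contradicts the existence of a uniform Jordan constant for $\Bir_1(X\times\A^1)$.

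Concretely, I fix an ample symmetric line bundle $L_0$ on $X$ and set $L=L_0^{\otimes n}$; since $\phi_L=n\phi_{L_0}$ kills $X[n]$, we have $X[n]\subset K(L):=\{y\in X(k):T_y^*L\simeq L\}$. Writing $L=\mathcal{O}(D)$ for an effective divisor $D$, each $y\in X[n]$ satisfies $T_y^*D\sim D$, so there is a rational function $f_y\in k(X)^*$, determined up to $k^*$, with $\div(f_y)=T_y^*D-D$. Substituting into the composition law
$$A(y_2,f_{y_2})\,A(y_1,f_{y_1})=A(y_1+y_2,\,T_{y_1}^*(f_{y_2})\cdot f_{y_1})$$
recorded in Section \ref{Bir1} and comparing divisors shows that $T_{y_1}^*(f_{y_2})\cdot f_{y_1}$ and $f_{y_1+y_2}$ differ by a scalar $c(y_1,y_2)\in k^*$. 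Therefore
$$\tilde G=\{\,A(y,\lambda f_y):y\in X[n],\ \lambda\in k^*\,\}$$
is a subgroup of $\Bir_1(X\times\A^1)$, a central extension of $X[n]$ by $k^*$ with cocycle $c$. The main task is to identify this extension with the restriction to $X[n]$ of Mumford's theta group $\mathcal{G}(L)$; once this is done, divisibility of $k^*$ lets me rescale each $A(y,f_y)$ so that its $n$-th power is the identity, and the rescaled lifts together with $\mu_n\subset k^*$ generate a finite subgroup $H_n\subset\tilde G$ fitting into
$$1\longrightarrow\mu_n\longrightarrow H_n\longrightarrow X[n]\longrightarrow 1,\qquad |H_n|=n^{2g+1},$$
where $g=\dim X$.

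The decisive structural input is that the commutator pairing $X[n]\times X[n]\to\mu_n$ induced by $H_n$ agrees (up to sign) with the Weil $e_n$-pairing, and so is a non-degenerate alternating form. Consequently, for any abelian subgroup $A\le H_n$ the image of $A$ in $X[n]$ is isotropic and hence has order at most $n^g$; combined with $|A\cap\mu_n|\le n$, this yields $|A|\le n^{g+1}$ and $[H_n:A]\ge n^g$. Letting $n\to\infty$ (and using $g\ge 1$) shows that no uniform bound on such indices can exist, which proves Theorem \ref{main1}. The principal obstacle I anticipate is the careful identification of the cocycle $c$, and in particular of the commutator pairing, with the Weil pairing coming from $\mathcal{G}(L)$: this is essentially a bookkeeping exercise about how translation by $y$ and multiplication by $f_y$ interact with the canonical $\mathbb{G}_m$-action on sections of $L$, but it is needed both to pin down $|H_n|=n^{2g+1}$ and to force the non-degeneracy on which the final counting rests.
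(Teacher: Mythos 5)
Your proposal follows essentially the same route as the paper: realize Mumford's theta group of $L_0^{\otimes n}$ inside $\Bir_1(X\times\A^1)$ via $(y,[h])\mapsto A(y,h)$, and use the non-degeneracy of the commutator pairing to force every commutative subgroup of the resulting finite Heisenberg-type group to have index at least $n^{\dim X}$. One caveat: the commutator pairing restricted to $X[n]$ is $e_n(x,\phi_{L_0}(y))$, which is non-degenerate only when $X[n]\cap\ker\phi_{L_0}=0$; since not every abelian variety carries a principal polarization, for a general ample $L_0$ this pairing can be degenerate on $X[n]$. This is easily repaired — the radical is contained in the fixed finite group $\ker\phi_{L_0}$, so isotropic subgroups still have order $O(n^{\dim X})$ and the indices still tend to infinity — or avoided altogether as in the paper, which works with the full group $H(L_0^{\otimes n})$ and invokes Mumford's structure theorem for $\G(L_0^{\otimes n})$, under which the commutator pairing is automatically non-degenerate.
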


We prove Theorem \ref{main1} in Section \ref{theta}.

\section{Group theory}
\label{group}
 Let ${\K}$ be a finite commutative group.
 Let $\hat{\K}:=\Hom(\K,k^{*})$ be the group of
{\sl characters} of $\K$. We write the group law on $\K$ additively and on $\hat{\K}$ multiplicatively. In particular,
we write $\l1$ for the trivial character of $\K$. Clearly, the groups ${\K}$ and $\hat{\K}$ are isomorphic
(noncanonically); in particular, they have the same order, which we denote by $N=N_{\K}$.

Let $\mu_N\subset k^{*}$ be the (sub)group of $N$th roots of unity.
Clearly, for every nonzero
$x\in \K$ there exists $\ell\in \hat{\K}$ with $\ell(x)\ne 1$. On the other hand,
$$N x=0, \ \ell(x)\in \mu_N \ \forall x\in \K, \ell \in \hat{\K}.$$
Let us consider the commutative finite group $\H_{\K}= \K\times \hat{\K}$ and the nondegenerate alternating
bi-additive form
$$e_{\K}: \H_{\K}\times \H_{\K} \to k^{*}, \ ((x,\ell),(x^{\prime},\ell^{\prime})) \mapsto
\ell^{\prime}(x)/\ell(x^{\prime}).$$
Clearly, all the values of $e_{\K}$ lie in $\mu_N$.

Let $E$ be an {\sl isotropic} subgroup of $\H_{\K}$ with respect to $e_{\K}$.
Let $E^{\bot}$ be the orthogonal complement of $E$ in $\H_{\K}$ with respect to $e_{\K}$. Then $E\subset E^{\bot}$ and the
 nondegeneracy of
$e_{\K}$  gives rise to a group isomorphism
$$\H_{\K}/E^{\bot}\cong \Hom(E,k^{*})=\hat{E}.$$
In particular, $E$ and $\H_{\K}/E^{\bot}$ have the same order.
 The inclusions $E \subset E^{\bot}\subset \H_{\K}$ imply that
$$\#(E)^2=\#(E)\cdot \#(\H_{\K}/E^{\bot})$$
divides $\#(\H_{\K})=N^2$ and therefore $\#(E)$ divides $N$. Since
$$N^2=\#(\H_{\K})=\#(E)\cdot \#(\H_{\K}/E),$$
the index of $E$ in $\H_{\K}$ is divisible by $N$.
This means that {\sl the index of every isotropic subgroup in $\H_{\K}$ is divisible by $N$ and therefore is greater or equal than $N$}.

 Following \cite[Sect. 1]{Mumford66}, let us consider the set
$$\G_{\K}=k^{*}\times \H_{\K}= k^{*}\times \K\times \hat{\K}$$ and introduce
on it the group structure, by defining the product
$$(a,x,\ell)\ (a^{\prime},x^{\prime},\ell^{\prime}):=(a a^{\prime}\ell^{\prime}(x),x+x^{\prime},\ell \ell^{\prime}).$$
One may naturally identify $k^{*}$ with the central subgroup $\{(a,0,\l1) \mid a \in k^{*}\}$. In fact, $\G_{\K}$ sits in
the short exact sequence
$$0 \to k^{*} \to \G_{\K}\stackrel{\pi}{\to} \H_{\K}\to 0$$
where $\pi: \G_{\K} \twoheadrightarrow \H_{\K}$ sends $(a,x,\ell)$ to $(x,\ell)$. One may easily check that if $g,
g^{\prime} \in \G_{\K}$ then
$$g g^{\prime} g^{-1} {g^{\prime}}^{-1}=e_{\K}(\pi(g),\pi(g^{\prime}))\in k^{*} \subset \G_{\K}.$$
It follows that a subgroup $\tilde{E}\subset \G_{\K}$ is {\sl commutative} if and only if its image $\pi(\tilde{E})$ is an
{\sl isotropic} subgroup in $\H_{\K}$; if this is the case then the index of  $\pi(\tilde{E})$ in $\H_{\K}$ is greater or equal than $N=N_{\K}$.

Clearly, the subset
$$\G_{\K}^{1}=\mu_N\times \H_{\K}= \mu_N\times \K\times \hat{\K}\subset \G_{\K}$$
is actually a subgroup of $\G_{\K}$. We have
 $\pi(\G_{\K}^{1})=\H_{\K}$. Therefore if $\tilde{E}$ is a commutative subgroup in $\G_{\K}^{1}$ then
the index of $\pi(\tilde{E})$ in $\H_K=\pi(\G_{\K}^{1})$ is greater or equal than $N=N_{\K}$. This implies that index of
 $\tilde{E}$ in $\G_{\K}^{1}$ is also greater or equal than $N=N_{\K}$.

\section{Mumford's theta groups}
\label{theta}

We keep all the notation and assumptions of Section \ref{Bir1}.



We denote by $\M_X$ the  constant sheaf (of rational functions) on the abelian variety  $X$ with respect to Zariski topology,
which assigns to every non-empty open subset $U$ of $X$ its field of rational functions $k(U)=k(X)$. For every $f \in
k(X)^{*}$ let us consider the sheaf (auto)morphism
$$[f]: \M_X \to \M_X$$
that is induced by multiplication by $f$ in $k(X)$. If $y \in X(k)$ then $T_y^{*}\M_X=\M_X$ and the induced (by functoriality)  sheaf
(auto)morphism $[f]: T_y^{*}[f]: T_y^{*}\M_X \to T_y^{*}\M_X$ coincides (after the identification of $T_y^{*}\M_X$ and $\M_X$) with
$$[T_y^{*}f]: \M_X \to \M_X.$$
 If $D$ is a divisor on $X$ then we view the invertible sheaf $\OC_X(D)$ as a certain subsheaf of $\M_X$ (see
\cite[Vol. II, Ch. 6, Sect. 1]{Sh}). Notice that for all $y\in X(k)$
$$T_y^{*}\OC_X(D)=\OC_X(T_y^{*}D).$$
If $D_1$ and $D_2$ are  linearly equivalent divisors on $X$  then  isomorphisms of invertible sheaves
$\OC_X(D_1)\cong \OC_X(D_2)$ are exactly (all the) morphisms of the form
$$[f]:\OC_X(D_1)\cong\OC_X(D_2)$$
with $\div(f)=D_1-D_2$. In particular, this set of isomorphisms is a $k^{*}$-torsor, since $\div(f)$ determines the
rational function $f$ up to multiplication by a nonzero constant.

If $[f]:\OC_X(D_1)\cong\OC_X(D_2)$ is an isomorphism of invertible sheaves and $y\in X(k)$ then the induced (by
functoriality) the isomorphism of invertible sheaves $T_y^{*}[f]:T_y^{*}\OC_X(D_1)\cong T_y^{*}\OC_X(D_2)$ coincides with
$$[T_y^{*}f]:\OC_X(T_y^{*}D_1)\cong \OC_X(T_y^{*}D_2).$$

Now let us choose an ample divisor on $X$ (e.g., a hyperplane section) and put $L=\OC_X(D)$. Then $L$ is an ample
invertible sheaf. Let us consider the (finite) commutative group
$$H(L)=\{x \in X(k)\mid L \cong T_x^{*}L \}.$$

\begin{rem}
\label{bign} Let $n$ be a positive integer. Then $nD$ remains ample, $\OC_X(nD)=L^n$ and
$$H(L^n)=\{x \in X(k)\mid nx \in H(L)\}$$
(see \cite[Sect. 1, Prop. 4]{Mumford66}). In particular, $H(L^n)$ contains the group $X_n$ of all points of order $n$ on
$X$. Since the order of $X_n$ is $n^{2\dim(X)}$ \cite[Ch. 2, Sect. 6]{MumfordAV}, the order of $H(L^n)$ is divisible by  $n^{2\dim(X)}$.

\end{rem}

 Following Mumford \cite[Sect. 1]{Mumford66}, let us consider the {\sl theta group} $\G(L)$ that consists of all pairs $(x,\phi)$ where $x \in H(L)$ and $\phi$ is an isomorphism
of invertible sheaves $L \cong T_x^{*}L$. The group law on $\G(L)$ is defined as follows. If $(x, \phi:L \cong
T_x^{*}L)\in \G(L)$ and $(y, \psi:L \cong T_y^{*}L)\in \G(L)$ then its composition $(y,\psi) (x,\phi)$ is defined as $$(x+y, T_{x}^{*}\phi \
\psi:L \cong T_y^{*}L \cong T_x^{*}(T_y^{*}L)=T_{x+y}^{*}L).$$ Taking into account our considerations in the beginning
of this Section and the equality $L=\OC_X(D)$, we conclude that $H(L)$ coincides with the set of $x\in X(k)$ such that
$D$ is linearly equivalent to $T_x^{*}D$, the theta group $\G(L)$ is the set of all pairs $(x,[f])$ where $x \in H(L)$
and $f$ is a nonzero rational function on $X$ such that
 $\div(f)=D-T_x^{*}D$.
  In addition, if $(y,[h]) \in \G(L)$ then
$$(x,[f]) (y, [h])=(x+y, [T_x^{*}h \cdot f]) \in \G(L).$$

\begin{rem}
\label{bigindex} It is known \cite[Sect. 1, Cor. of Th. 1]{Mumford66} that there exists a finite sequence of positive integers (elementary divisors) $\delta=(d_1, \dots , d_r)$ such that $d_{i+1}\mid d_i$ and the finite commutative group
$K(\delta)=\oplus_{i=1}^r \Z/d_i\Z$ enjoys the following properties:

\begin{itemize}
\item
 $H(L)$ is isomorphic to $\H_{K(\delta)}$;
 \item
  the groups $\G_{K(\delta)}$ and $\G(L)$ are
isomorphic.
\end{itemize}

 Applying the results of Section
\ref{group}, we conclude that $\G(L))$ contains a finite subgroup $G$ that enjoys the following property: every
commutative subgroup in  $G$ has index that is greater or equal than $\#(K(\delta))=\sqrt{\#(H(L))}$.
\end{rem}

\begin{proof}[Proof of Theorem \ref{main1}]
Comparing the multiplication formulas for $(x,[f])$'s and $A(y,f)$'s (Sect. \ref{Bir1}), we conclude that the embedding
$$\G(L) \hookrightarrow \Bir_1(X\times \AA^1), \ (y,[h])\mapsto A(y,h)$$
is actually a group homomorphism. So $\G(L)$
is isomorphic to a subgroup of $\Bir_1(X\times \AA^1)$. Applying this assertion to all ample divisors  $nD$ and invertible sheaves $L^n=\OC_X(nD)$ (where $n$ is a
positive integer) and combining it with Remarks \ref{bign}  and \ref{bigindex}, we conclude that for every positive
integer $n$ there exists a finite subgroup
$$G \subset \G(L^n)\hookrightarrow \Bir_1(X\times \AA^1)$$
that enjoys the following property: every commutative subgroup in $G$ has index that is greater or equal than
${(n^{2\dim(X)})}^{1/2}=n^{\dim(X)}$; in particular, this index is greater or equal than $n$. This proves that
$\Bir_1(X\times \AA^1)$ is {\sl not} a Jordan group.
\end{proof}

\end{document}